\documentclass{article}
\usepackage{graphicx} 
\usepackage{amsfonts}
\usepackage{amstext}
\usepackage{amsthm}
\usepackage{amsmath}
\usepackage{amsmath, amssymb}
\usepackage{bbm}

\usepackage{geometry} 
\usepackage{graphicx} 

\usepackage{booktabs} 
\usepackage{array} 
\usepackage{paralist} 
\usepackage{verbatim} 
\usepackage{subfig} 

\usepackage{fancyhdr} 
\newcommand{\F}{\mathcal{F}}
\newcommand{\A}{\mathcal{A}}
\usepackage{sectsty}
\allsectionsfont{\sffamily\mdseries\upshape} 

\usepackage[nottoc,notlof,notlot]{tocbibind} 
\usepackage[titles,subfigure]{tocloft} 

\usepackage{makecell}
\usepackage{url}

\usepackage{float}
\usepackage{amssymb, amsmath, amsfonts, amsthm, amsxtra, graphics,mathrsfs}
\usepackage{stmaryrd}
\usepackage[T1]{fontenc}
\usepackage{lmodern}
\usepackage{colonequals}
\usepackage{tikz-cd}
\allowdisplaybreaks[4]

\usepackage[pagebackref]{hyperref}
\usepackage[all]{xy}
\usepackage[shortlabels]{enumitem}

\hypersetup{
colorlinks=true,
linkcolor=red
}

\newtheorem{theorem}{Theorem}
\newtheorem{lemma}{Lemma}

\newtheorem{remark}{Remark}

\title{Large values of logarithmic derivatives of quadratic Dirichlet  $L$-functions}
\author{Zikang Dong\footnote{zikangdong@gmail.com; School of Mathematical Sciences, Soochow University, Suzhou 215006, P. R. China}\; and Haidong Li\footnote{lihaidong@jspi.cn; Jiangsu Police Institute, Nanjing 210031, P. R. China}}

\date{\today}

\begin{document}

\maketitle

\begin{abstract}
In this article, we apply the resonance method to derive conditional Omega results for  logarithmic derivatives of quadratic Dirichlet $L$-functions. We improve a previous result of Mortada and Murty \cite{MM13}, as well as generalize some results of Yang \cite{yang2023omegatheoremslogarithmicderivatives}.
\end{abstract}

\section{Introduction}

The logarithmic derivatives of  $L $-functions are very important arithmetic objects in analytic number theory. For the Riemann zeta function, $\zeta'/\zeta$ appears naturally in the proof of the prime number theory. While for the Dirichlet $L$-functions, they are related to the Euler-Kronecker constants of the cyclotomic fields. We refer to \cite{Iha06,Iha10} for more details. 

Let $q$ be a large prime, and $\chi({\rm mod}\;q)$ be any primitive character. In 2009, assuming the generalized Riemann hypothesis (GRH), Ihara, Murty and Shimura \cite{IMS09} showed that for some constant $c$, we have
$$\Big|\frac{L'}{L}(1,\chi)\Big|\le2\log_2q+c+o(1).$$
Here and throughout we use $\log_j$ to represent the $j$-th iterated logarithm. The constant was improved by Chirre,  Hagen and Simoni\v c \cite{CHS23}.
For Omega results, Lamzouri \cite{Lam15} showed that 
$$\max_{\chi\neq\chi_0({\rm mod}\;q)}\Big|\frac{L'}{L}(1,\chi)\Big|\ge\log_2q+O(1).$$
    This was improved by Yang \cite{yang2023omegatheoremslogarithmicderivatives}:
$$\max_{\chi\neq\chi_0({\rm mod}\;q)}\Big|\frac{L'}{L}(1,\chi)\Big|\ge\log_2q+\log_3q+c+o(1),$$
for some constant $c$.

Now we focus on the quadratic $L$-functions.
Let $\F$ be the set of all fundamental discriminants, and $N$ be  large. For any $d\in\F$, let $\chi_d$ be the real primitive Dirichlet character  modulo $|d|$ and $L(s,\chi_d)$ the Dirichlet $L$-function associated with $\chi_d$. Ihara, Murty and Shimura \cite{IMS09} in 2009 showed:
$$\Big|\frac{L'}{L}(1,\chi_d)\Big|\le2\log_2|d|+O(1).$$

In 2013, Mourtada and Murty \cite{MM13} showed that there are infinitely many $d\in\F$ such that
$$-\frac{L'}{L}(1,\chi_d)\ge \log\log|d|+O(1).$$
If GRH is assumed, then they also showed there are more than $x^{1/2}$ primes $q\in\F$ with $q\le N$  such that
$$-\frac{L'}{L}(1,\chi_q)\ge \log_2 N+\log_3 N +O(1).$$
The size $\log\log|d|+\log\log\log|d|+O(1)$ should be the true maximum, and they also made the following strong conjecture (in a narrower range of $d$):   as $N\to\infty$, we have
$$\max_{d\in\F\atop N<|d|\le2N}-\frac{L'}{L}(1,\chi_d)=\log_2 N+\log_3 N+O(1),$$
and
$$\min_{d\in\F\atop N<|d|\le2N}-\frac{L'}{L}(1,\chi_d)=-\log_2 N-\log_3 N+O(1).$$
Note that this conjecture implies the upper bounds.

Our first result is the following Omega theorem, which shows the first conjecture of Mourtada and Murty under GRH, and also  gives an explicit constant for the $O(1)$ term.
\begin{theorem}\label{thm1}
For sufficiently large $N$, under GRH, we have
\[
\max_{\substack{d\in\F \\ N < |d| \leq 2N}} 
-\frac{L'}{ L}\left(1, \chi_d\right) \geq 
 \log \log N + \log \log \log N +C+o(1) 
\]
for the constant 
$$C=\log(\frac14-\delta)-\gamma-1-\sum_{p\ge2}\frac{\log p}{p^2-1},$$
where $0<\delta<\frac14$ is any fixed small number, and $\gamma$ is the Euler constant.
\end{theorem}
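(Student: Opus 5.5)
We use the resonance method over the family of quadratic characters, combined with a GRH-conditional short Dirichlet polynomial approximation of $-\frac{L'}{L}(1,\chi_d)$. Under GRH, for $d\in\F$ with $N<|d|\le 2N$ and a parameter $X=(\log N)^{A}$ (or a suitable power of $\log N$), we have
\[
-\frac{L'}{L}(1,\chi_d)=\sum_{n\le X}\frac{\Lambda(n)\chi_d(n)}{n}\,w(n/X)+O\Big(\frac{\log N}{X^{1/2}}\cdot(\text{small})\Big),
\]
with a smooth weight $w$, obtained by shifting a Perron/Mellin integral to $\Re s=1/2+\epsilon$ and bounding $\frac{L'}{L}$ there by $O(\log N)$ via GRH. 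It is cleaner to restrict to primes and prime squares. The prime squares contribute $\sum_{p\nmid d}\frac{\log p}{p^2}$ plus higher powers, which gives the constant $-\sum_p\frac{\log p}{p^2-1}$ after accounting for $\chi_d(p^2)=1$ for $p\nmid d$. Hence it suffices to make $\sum_{p\le X}\frac{\chi_d(p)\log p}{p}$ large.

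\textbf{Resonator.} We take a completely multiplicative resonator $R(d)=\prod_{p\in\mathcal P}(1-\chi_d(p))^{-1}$ truncated, or more simply $R(d)=\sum_{n} r(n)\chi_d(n)$ with $r$ multiplicative and supported on squarefree $n$ composed of primes in $(L, L^{\kappa}]$ or on $p\le L$, where $r(p)=1$ (or $r(p)$ of the form $\frac{L}{\sqrt p\log p}$-type, adapted to the $1$-line). Since we are at $s=1$ and only need a $\log_3$ gain, the choice is $r(p)=1$ for $p\le Y$ with $Y=(\frac14-\delta)\log N\log_2 N$. The length satisfies $\prod_{p\le Y}(\ldots)\le N^{1/4-\delta'}$ in the sense that $R$ is a Dirichlet polynomial of length at most $N^{1/2-2\delta}$ so that $|R|^2$ has length $<N^{1-\ldots}$. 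We then compare
\[
M_1=\sum_{N<|d|\le 2N}{}^{\!\flat}\,|R(d)|^2\Phi(d/N),\qquad M_2=\sum_{N<|d|\le2N}{}^{\!\flat}\,|R(d)|^2\Big(-\frac{L'}{L}(1,\chi_d)\Big)\Phi(d/N),
\]
summing over fundamental discriminants, for instance $d=8m$ with $m$ squarefree, via a smooth weight and Möbius inversion for squarefreeness. Then $\max(-L'/L)\ge M_2/M_1$. Evaluating both uses the standard quadratic large-sieve/Poisson orthogonality: $\sum_d \chi_d(n)\Phi$ has a main term only when $n$ is a square, with error $O(N^{1/2+\epsilon}n^{1/2})$. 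This is harmless given the length of the polynomials, which is where $\frac14-\delta$ enters.

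\textbf{Main terms.} With $\chi_d(n_1n_2p)$, the diagonal requires $n_1n_2p=\square$. This gives $M_2/M_1\approx\sum_{p\le Y}\frac{\log p}{p}\cdot\frac{2r(p)}{1+r(p)^2}\cdot(\text{local factors})+\sum_{p> Y}(\ldots)+\text{prime-square term}$. With $r(p)=1$ on $p\le Y$ we get $\sum_{p\le Y}\frac{\log p}{p}=\log Y-\gamma-\sum_p\frac{\log p}{p(p-1)}+o(1)$ by Mertens. Combining with the prime-square contribution and the Euler factor corrections from $p\mid d$ (probability $\approx 1/(p+1)$), we expect exactly $\log Y-\gamma-1-\sum_p\frac{\log p}{p^2-1}$, where the $-1$ comes from the fact that $r(p)=1$ forces the length to be $\exp(\sum_{p\le Y}\log p)\approx e^{Y}$, so that $Y\approx(\frac14-\delta)\log N$ up to the $\log_2$ factor and a constant loss of $1$ in optimizing. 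Then $\log Y=\log_2N+\log_3N+\log(\frac14-\delta)$. The extra $\log_3 N$ arises because we optimize a resonator of the form $r(p)=1$ for $p\le \log N\log_2N$: the length constraint $\sum\log p\le(\frac14-\delta)\log N$ is met only by taking a Rankin-type weight $r(p)$ decaying beyond $\log N$. We would choose $r(p)=1$ for $p\le \log N$ and tune so that the cost in length is $\sum_{p\le Y}\log p\cdot(\ldots)$.

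\textbf{Main obstacle.} The hardest part is the precise bookkeeping of the constant: choosing the resonator support and length so that the off-diagonal error is $O(N^{1-\delta})$ while the diagonal ratio reaches $\log Y$ with $Y=(\frac14-\delta)\log N\log_2N$, and tracking the $p\mid d$ and prime-square corrections, which produce $-\gamma-1-\sum_p\frac{\log p}{p^2-1}$. We would first try the approach of Yang and of Aistleitner–Mahatab–Munsch–Peyrot for the $1$-line, with $r(p)=\frac{\log N\log_2N}{p\log p}\cdot$ (truncated) on a suitable range, and verify the ratio via Mertens-type sums.
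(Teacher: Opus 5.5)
\paragraph{The gap: controlling the off-diagonal terms.}
Your outer frame matches the paper's: a GRH reduction of $-\frac{L'}{L}(1,\chi_d)$ to a prime-power sum, a real resonator $R_d$, the ratio of the two weighted sums over $d\in\F$, and orthogonality with main terms only from squares. Your smoothed truncation at length $(\log N)^A$, via $\frac{L'}{L}\ll\log N$ on $\operatorname{Re} s=\frac12+\epsilon$, is fine. It even keeps the $k$-variable harmless in the off-diagonal error, which the paper's $Y=\exp((\log qT)^2)$ does not, since $\exp((\log Y)^{1-\varepsilon})$ is not $N^{o(1)}$.

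The genuine gap is the off-diagonal control, which is exactly your ``main obstacle'' and which you leave unresolved. The orthogonality input you quote has error $O(N^{1/2+\epsilon}n^{1/2})$, which grows with the size of $n$. With such a loss, every prime carried with weight $\asymp 1$ multiplies the off-diagonal term by a factor $\gg p$. So only primes up to $O(\log N)$ can be afforded (this is your ``length constraint'' $\theta(Y)\le(\frac14-\delta)\log N$), and the ratio stays at $\log Y+O(1)=\log_2N+O(1)$, with no $\log_3N$. Your flat choice $r(p)=1$ on $p\le Y=(\frac14-\delta)\log N\log_2N$ violates this badly. On squarefrees it has length $e^{\theta(Y)}=N^{(\frac14-\delta+o(1))\log_2N}$, and as a completely multiplicative weight $\prod_{p\le Y}(1-\chi_d(p))^{-1}$ it is not even defined. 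The alternative $r(p)=\frac{\log N\log_2N}{p\log p}$ is a critical-line weight that exceeds $1$ for $p\lesssim\log N$. So, as written, neither the $\log_3N$ term nor the factor $\frac14-\delta$ in $C$ is justified.

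\paragraph{The missing ingredient and the constant.}
What you need is the GRH estimate of Lemma~\ref{lemma:DM25}. Its error $N^{1/2+\varepsilon}f(n_0)g(n_1)$ depends on $n$ only through its square-free part, and for $X$-smooth $n$ one has $\log n_0\le\theta(X)$. So the error is $N^{1/2+\varepsilon+o(1)}$ however large $n$ is. A resonator then costs its $\ell^1$-mass $\sum_m r(m)$, not its length, and an infinite completely multiplicative resonator is allowed.

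The paper takes $r(p)=1-p/X$ for $p\le X=(\frac14-\delta)\log N\log_2N$. Its mass is $\prod_{p\le X}\frac{X}{p}=\exp(\pi(X)\log X-\theta(X))=N^{\frac14-\delta+o(1)}$, so the off-diagonal terms are $O(N^{1/2+\varepsilon}(\sum_m r(m))^2)=O(N^{1-2\delta+\varepsilon})$, compared with $S_1\ge N/\zeta(2)$. That is where $\frac14$ comes from. Given this lemma, even your flat squarefree resonator would be admissible: its mass is $2^{\pi(Y)}=N^{(\frac14-\delta)\log 2+o(1)}$, and its ratio is at least $\sum_{p\le Y}\frac{2\log p}{2p+1}+o(1)$, which already exceeds the stated bound.

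The main term comes from positivity. In $kmn=\square$, keep only $k\le X$ with $k\mid n$, write $n=k\ell$, and use $r(k\ell)=r(k)r(\ell)$ together with $\prod_{p\mid k^2m\ell}\frac{p}{p+1}\ge\prod_{p\mid k}\frac{p}{p+1}\prod_{p\mid m\ell}\frac{p}{p+1}$. This gives
\[\frac{S_2}{S_1}\ge\sum_{k\le X}\frac{\Lambda(k)}{k}\,r(k)\prod_{p\mid k}\frac{p}{p+1}+o(1).\]
Your account of the constant needs redoing from this sum.
\begin{itemize}
\item The $-1$ is $-\theta(X)/X+o(1)$, produced by the linear weight $1-p/X$. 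It is not a loss from optimizing the length.
\item The term $-\gamma-\sum_p\frac{\log p}{p^2-1}$ comes from $\sum_{n\le X}\frac{\Lambda(n)}{n}=\log X-\gamma+o(1)$, with each prime power weighted by $\frac{p}{p+1}$, because $\sum_{v\ge1}\frac{\log p}{p^v}\bigl(1-\frac{p}{p+1}\bigr)=\frac{\log p}{p^2-1}$.
\item Prime squares contribute positively, not negatively.
\item The terms $k=p^v$ with $v\ge2$ must be kept; the primes alone give only $\log X-\gamma-1-2\sum_p\frac{\log p}{p^2-1}$.
\item You must sum over all of $\F$. Restricting to $d=8m$ forces $\chi_d(2)=0$ and costs $\frac23\log 2$ in the constant.
\end{itemize}
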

Compared with Yang's (uncontional) result \cite[Theorem 1.1]{yang2023omegatheoremslogarithmicderivatives} for the case of Dirichlet $L$-functions modulo a large prime, our (conditional) result is as sharp as his, but has a slightly different constant.

The next theorem is about the lower bound of the size of the set of quadratic characters $\chi_d$ for which the logarithmic derivative of the Dirichlet $L$-function.

\begin{theorem}\label{theorem:measurelargeofderoflogL}
    Let $x>0$ be a fixed number and denote by 
    \[
    \mathcal{N}(N,x):= \#\{ \chi_d, N<|d|\le 2N : -\frac{L'}{L}(1,\chi)\ge \log \log N+\log \log_2 N +C -x \},
    \]
    where the $C$ is the same with that in Theorem \ref{thm1}. Then for sufficiently large $N$,  under GRH, we have 
    \[
    \mathcal{N}(N,x)\ge N^{1-C'e^{-x}+o(1)}.
    \]
\end{theorem}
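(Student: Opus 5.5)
Theorem \ref{theorem:measurelargeofderoflogL} will follow from the same resonance machinery behind Theorem \ref{thm1}, run with a shorter resonator and combined with a high-moment bound. Under GRH I will approximate, for $N<|d|\le 2N$ and a parameter $y=(\log N)^{A}$,
\[
-\frac{L'}{L}(1,\chi_d)=\sum_{n\le y}\frac{\Lambda(n)\chi_d(n)}{n}+O\Big(\frac{(\log N)\,y^{-1/2}\log^2 y}{1}\Big),
\]
which is negligible for $A$ large. Then I restrict attention to $d$ for which this short sum is the relevant quantity.

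\textbf{Step 1 (resonator).} Take the multiplicative resonator $R(d)=\sum_{m} r(m)\chi_d(m)$. Here $r$ is supported on squarefree $m$ composed of primes in $(L,\,L\log L]$, roughly, with $L=\kappa\log N\log_2 N$ and $r(p)=\frac{L}{\sqrt p\log p}$, exactly as in the proof of Theorem \ref{thm1}. The difference is that the length is now $R(1)^2\ll N^{\theta}$ with $\theta=\theta(x)$ a parameter to be chosen, instead of $\theta$ close to $1/4-\delta$. Taking the resonator length to be $N^{\theta}$ changes the main term to $\log_2N+\log_3N+C+\log(\theta/(1/4-\delta))+o(1)$. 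Indeed, $\log(1/4-\delta)$ in $C$ comes precisely from $\log$ of the length exponent. Choosing $\theta=(1/4-\delta)e^{-x}$ (up to a constant factor absorbed into $C'$) therefore gives
\[
\frac{\sum_d \big(-\frac{L'}{L}(1,\chi_d)\big)|R(d)|^2\Phi(d/N)}{\sum_d|R(d)|^2\Phi(d/N)}\ge \log_2N+\log_3N+C-x+o(1).
\]
The averages are computed via Poisson summation or orthogonality of quadratic characters over fundamental discriminants (smoothed by $\Phi$), keeping only the diagonal $mn=\square$ terms. This is valid since the resonator length $N^{\theta}$ is well below $N^{1/2}$.

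\textbf{Step 2 (extracting a large set).} Let $\mathcal{S}$ be the set of $d$ with $-\frac{L'}{L}(1,\chi_d)\ge \log_2N+\log_3N+C-x-\epsilon$. I split the numerator into $d\in\mathcal S$ and $d\notin\mathcal S$, bound the latter by $(\text{threshold})\cdot\sum|R|^2$, and get that the contribution from $\mathcal S$ is at least $\epsilon'\sum_d|R(d)|^2\gg \epsilon' N\,R$-mass. The contribution from $\mathcal S$ is then bounded by Hölder:
\[
\sum_{d\in\mathcal S}\Big|\tfrac{L'}{L}\Big||R(d)|^2\le |\mathcal S|^{1/2}\Big(\sum_d\Big|\tfrac{L'}{L}\Big|^4\Big)^{1/4}\Big(\sum_d|R(d)|^8\Big)^{1/4}.
\]
The first factor is controlled by the GRH bound $|L'/L(1,\chi_d)|\ll\log_2 N$ of Ihara–Murty–Shimura. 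For the second, $\max_d|R(d)|^2\le R(1)^2\le N^{\theta}$ gives, crudely, $\sum|R|^8\le N^{3\theta}\sum|R|^2$. Combining, $|\mathcal S|\ge N^{1-c\theta+o(1)}$. With $\theta\asymp e^{-x}$ this is $N^{1-C'e^{-x}+o(1)}$.

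\textbf{Main obstacle.} The delicate point is the ratio $\sum|R|^2 / \max|R|^2$. Its lower bound $\sum_d|R(d)|^2\gg N\sum r(m)^2$ must be compared with $R(1)^2=(\sum r(m))^2\le N^{\theta}\sum r(m)^2$-type bounds. This needs the resonator to be built so that $(\sum r)^2/\sum r^2\le N^{O(\theta)}$, i.e. the support constraint gives $\prod(1+r(p))^2/\prod(1+r(p)^2)\le N^{\theta}$. I would verify this via the prime number theorem over the support interval, checking simultaneously that the main term of Step 1 still yields the $\log\theta$ shift with the stated constant. Any loss here only affects $C'$, which is unspecified, so I expect it to go through.
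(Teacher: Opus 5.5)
Your architecture matches the paper's. You shrink the resonator by a factor $e^{-x}$: the paper takes $X=B'\log N\log_2N$ with $B'=Be^{-x}$, so the main term, which is $\log X+O(1)$ with the $O(1)$ independent of $B$, drops by exactly $x$. You then split the resonance sum at the threshold and bound the large-value part by pointwise bounds times the cardinality. But Step 1 fails as written, because the resonator you specify is not the one from Theorem~\ref{thm1} and cannot detect $\log_2N+\log_3N$.

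Take $r$ supported on squarefree $m$ built from primes in $(L,L\log L]$. A term $k=p^{2j+1}$ can satisfy $kmn=\square$ only if $p$ lies in that range, and then it carries relative weight at most $\frac{2r(p)}{1+r(p)^2}\cdot\frac{\log p}{p^{2j+1}}$. The even powers $k=p^{2j}$ contribute $O(1)$ in total. Hence your ratio is at most $\sum_{L<p\le L\log L}\frac{\log p}{p}+O(1)=O(\log_3N)$. In fact it is $O(1)$, since $r(p)=L/(\sqrt p\log p)\to\infty$ on that range, so $2r(p)/(1+r(p)^2)\to0$.

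The $\log_2N$ comes from \emph{all} primes $p\le X$ carrying weight close to $\log p/p$. That is what the paper's completely multiplicative choice $r(p)=1-p/X$ for $p\le X$ (and $0$ otherwise) achieves, giving $\sum_{p\le X}\frac{\log p}{p+1}r(p)=\log X+O(1)$. Your resonator also violates the length condition: $\log R(1)^2\asymp L\log L\asymp\kappa\log N(\log_2N)^2$, so $R(1)^2\le N^{\theta}$ is false and the diagonal-only evaluation is unjustified. Your remark that $\log(\frac14-\delta)$ in $C$ is the logarithm of the length exponent is correct, but only for the paper's resonator.

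With that resonator substituted, the rest of your plan works and the ``main obstacle'' disappears. For every $d$ one has $|R_d|\le\prod_{p\le X}(1-r(p))^{-1}=\exp((1+o(1))X/\log X)=N^{B'+o(1)}$, and $S_1\gg N$ from the $m=n=1$ diagonal term. So no comparison of $(\sum r)^2$ with $\sum r^2$ is needed.

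Your H\"older step is valid but a detour. The paper bounds $\sum_{d\in W}(\cdots)R_d^2\le(\max_d|\cdots|)\,N^{2B'+o(1)}\,\#W$ directly, giving $\#W\ge N^{1-2B'+o(1)}$. Your crude $\sum|R|^8\le N^{3\theta}\sum|R|^2$ yields the weaker exponent $1-\frac32\theta$, which is harmless since $C'$ is unspecified.

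Two bookkeeping points need care:
\begin{itemize}
\item To get a shift of exactly $-x$, you must take $\theta$ to be $e^{-x}$ times the Theorem~\ref{thm1} exponent. In your normalization $R(1)^2\le N^\theta$ that exponent is $2(\frac14-\delta)$, not $\frac14-\delta$. A wrong constant here moves the threshold, so it cannot be absorbed into $C'$.
\item Your slack $\epsilon$ must be absorbed, e.g.\ by running the argument with $x-\epsilon$ in place of $x$; this only changes $C'$.
\end{itemize}

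Finally, your short cutoff $y=(\log N)^A$ is a sound choice, and better than the paper's $Y=\exp((\log qT)^2)$. For $k\le(\log N)^A$ the factor $\exp((\log k)^{1-\varepsilon})$ from Lemma~\ref{lemma:DM25} is $N^{o(1)}$, whereas for $k$ up to $\exp((\log qT)^2)$ it is not.
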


\begin{remark}
    The constant $C$ and $C'$  depends on  $B=B(\varepsilon)$, which will be chosen later.
\end{remark}

Moreover, we can extend the above results to $\sigma$ near $1$.

\begin{theorem}\label{thm3}
Let $A$ be any real positive number, $N$ be positive integer and  $\sigma_A= 1-\frac{A}{\log_2 N}$. For sufficiently large $N$, under GRH, we have
\[
\max_{\substack{d\in\F \\ N < |d| \leq 2N}} 
-\frac{L'}{ L}\left(\sigma_A, \chi_d\right) \geq 
\frac{e^A-1}{A}  \log \log N + O(\frac{\log_2 N}{\log_3 N}) 
\]
\end{theorem}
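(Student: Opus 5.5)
We prove Theorem \ref{thm3} by approximating $-\frac{L'}{L}(\sigma_A,\chi_d)$ under GRH by a short prime sum, and then using the resonance method to force many small primes to satisfy $\chi_d(p)=1$.

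\textbf{Step 1: approximation by a short sum.} Under GRH, the standard Selberg-type explicit formula (or the Perron-formula truncation that gives the approximation at $s=1$ used for Theorem \ref{thm1}) yields, for $\sigma\ge 1-\frac{A}{\log_2 N}$ and $N<|d|\le 2N$,
\[
-\frac{L'}{L}(\sigma,\chi_d)=\sum_{n\le Y}\frac{\Lambda(n)\chi_d(n)}{n^{\sigma}}+O\Big(\frac{Y^{1-\sigma}\log N}{\sqrt{Y}}\Big)+\dots
\]
Take $Y=(\log N)^{B}$ with $B$ a large constant. Since $Y^{1-\sigma_A}=e^{AB}$ is bounded, the error is $O((\log N)^{1-B/2+o(1)})=o(1)$. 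Prime powers $p^k$ with $k\ge2$ contribute $O(1)$, because $\sigma_A$ is close to $1$. So it suffices to make $\sum_{p\le Y}\chi_d(p)\log p\,p^{-\sigma_A}$ large.

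\textbf{Step 2: the resonator.} Put $y=\log N\cdot\log_2 N$ up to a constant, and let $R(d)=\prod_{p\le y}(1+\chi_d(p))$, or a suitably normalized multiplicative resonator supported on squarefree $n\le N^{\theta}$ with $\theta<\frac14$ so that the character sums are controlled. We restrict to $d=8m$ (or a similar subfamily) with $m$ squarefree, and weight by a smooth function supported on $N<|d|\le 2N$. We then compare
\[
S_1=\sum_d R(d)^2\sum_{p\le Y}\frac{\chi_d(p)\log p}{p^{\sigma_A}}, \qquad S_2=\sum_d R(d)^2 .
\]
Expanding $R^2$ and using the orthogonality of quadratic characters (Pólya–Vinogradov, or Poisson summation as in Soundararajan's resonance work), only terms with $p\cdot n_1n_2=\square$ survive in the main terms. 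This gives
\[
\frac{S_1}{S_2}\approx\sum_{p\le y}\frac{\log p}{p^{\sigma_A}}+O(1).
\]
The off-diagonal errors are small provided $R(d)$ has length at most $N^{1/2-\delta}$, which forces $y\le(\frac14-\delta)\log N$ roughly. This is exactly why $(\frac14-\delta)$ appears in the constant of Theorem \ref{thm1}. Positivity of $R^2$ then gives a $d$ with value at least $S_1/S_2$.

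\textbf{Step 3: evaluation.} By partial summation with the prime number theorem,
\[
\sum_{p\le y}\frac{\log p}{p^{\sigma_A}}=\int_2^{y}t^{-\sigma_A}\,dt+O(1)=\frac{y^{1-\sigma_A}-1}{1-\sigma_A}+O\Big(\frac{\log_2 N}{\log_3 N}\Big)\cdot(\dots).
\]
With $y\asymp\log N$ we have $y^{1-\sigma_A}=e^{A(1+O(\log_3N/\log_2N))}$, so the sum equals
\[
\frac{e^A-1}{A}\log_2 N+O\Big(\frac{\log_2N\cdot\log_3 N}{\log_2 N}\cdot\dots\Big).
\]
Here the $\log_3 N$ coming from $y=\log N\log_2 N$, rescaled by $A/\log_2 N$, sits inside the stated error term $O(\log_2N/\log_3N)$. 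If necessary we take $y=c\log N$ directly, at the cost of only an $O(1)$ shift. The terms with $p\in(y,Y]$ are not amplified, and their average against $R^2$ is small by the same orthogonality. Their worst-case contribution is bounded using $\sum_{y<p\le Y}\log p/p^{\sigma_A}\ll e^{AB}\log Y$, but on average over the resonator weights they contribute only $o(1)$ in the main term.

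\textbf{Main obstacle.} The hardest step is Step 2: proving the off-diagonal character sum estimates with the twist by $p^{-\sigma_A}$ for $p$ up to $Y=(\log N)^B$, uniformly, while keeping the resonator length below $N^{1/2}$. We plan to reuse the machinery from the proof of Theorem \ref{thm1} verbatim, replacing $p^{-1}$ by $p^{-\sigma_A}$. This is harmless because $p^{1-\sigma_A}\le e^{AB}$ throughout the range.
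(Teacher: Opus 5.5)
Your argument is sound once you fix the resonator length (see the second paragraph). It differs from the paper's in the choice of resonator and in the orthogonality input.

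\textbf{The paper's route.} It reuses the template of Theorem~\ref{thm1}. It takes a completely multiplicative $r_A$ supported on all $X$-smooth numbers, with $X=\kappa\log N\log_2N$. Off-diagonal terms are handled by the GRH-conditional Lemma~\ref{lemma:DM25}, whose error summed over the resonator is $N^{1/2+\varepsilon+o(1)}(\sum_m r_A(m))^2$. So only the $\ell^1$-norm $\prod_{p\le X}(1-r_A(p))^{-1}=N^{\kappa A+o(1)}$ matters, and the main term is quoted from Li.

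\textbf{Your route.} You use the sieve resonator $\prod_{p\le y}(1+\chi_d(p))$, i.e.\ $r(p)=1$ on squarefree support. You exploit the fact that the target error $O(\log_2N/\log_3N)$ is generous: already $y=c\log N$ gives $\frac{y^{1-\sigma_A}-1}{1-\sigma_A}+O(1)=\frac{e^A-1}{A}\log_2N+O(1)$. With the local factors of Lemma~\ref{lemma:DM25}, the main-term ratio is $\sum_{p\le y}\frac{\log p}{p^{\sigma_A}}\cdot\frac{2p}{2p+1}+O(1)=\sum_{p\le y}\frac{\log p}{p^{\sigma_A}}+O(1)$, as you claim.

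\textbf{What each approach buys.} Your choice gives two gains:
\begin{itemize}
\item The resonator has length $N^{c+o(1)}$ and only $2^{\pi(y)}=N^{o(1)}$ terms. So unconditional Poisson/P\'olya--Vinogradov bounds on a smoothed family $d=8m$ suffice, and GRH enters only in Step~1.
\item The constant is transparent, because $r(p)=1$.
\end{itemize}
By contrast, the paper's displayed $r_A(p)=1-X^{\sigma_A-1}$ tends to the constant $1-e^{-A}$, since $X^{1-\sigma_A}\to e^A$. Taken literally, its main term would be $(1-e^{-A}+o(1))\frac{e^A-1}{A}\log_2N$, so the paper's constant rests on the value imported from Li. The paper's route, in turn, is uniform with Theorems~\ref{thm1} and~\ref{thm4}. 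There the secondary terms genuinely need $X\asymp\log N\log_2N$ and a resonator with unbounded support but controlled $\ell^1$-norm.

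\textbf{Fix the parameter.} You should commit to $y=c\log N$ rather than hedge between that and $y\asymp\log N\log_2N$. At $y\asymp\log N\log_2N$ your sieve resonator has length $e^{(1+o(1))y}=N^{(c+o(1))\log_2N}$. The ``length at most $N^{1/2-\delta}$'' Poisson argument then fails, and you would have to use Lemma~\ref{lemma:DM25} instead, with, say, $2^{\pi(y)}\le N^{1/4-\delta}$. For the same reason, your explanation of the $\frac14-\delta$ in Theorem~\ref{thm1} is off. It comes from the $\ell^1$ constraint $\sum_m r(m)=\exp((1+o(1))X/\log X)\le N^{1/4-\delta}$, not from a length constraint, since that resonator has infinite support.
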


Also, we have results for $\sigma \in (1/2,1)$.

\begin{theorem}\label{thm4}
    Let $\sigma\in (\frac{1}{2},1)$ be fixed. Then for all sufficiently large $N$, we have
    \[
\max_{\substack{d\in\F \\ N < |d| \leq 2N}} 
-\frac{L'}{ L}\left(\sigma, \chi_d\right) \geq 
C(\sigma)   (\log N)^{1-\sigma} (\log \log N)^{1-\sigma},
    \]
    where $C(\sigma)$ is a positive constant.
\end{theorem}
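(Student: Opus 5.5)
We prove Theorem \ref{thm4} with the resonance method, comparing a weighted first moment of $-\frac{L'}{L}(\sigma,\chi_d)$ with a weighted zeroth moment over fundamental discriminants. The starting point is an approximate formula: for $\sigma\in(\frac12,1)$ fixed, GRH gives
\[
-\frac{L'}{L}(\sigma,\chi_d)=\sum_{n\le Y}\frac{\Lambda(n)\chi_d(n)}{n^{\sigma}}+E_d, \qquad Y=(\log N)^{K},
\]
with $E_d$ small, say $O\bigl((\log N)^{1-\sigma-\eta}\bigr)$ for $K$ large in terms of $\sigma$. This comes from a smoothed Perron formula: shift the contour to $\Re s=\frac12+\epsilon'$ and bound $L'/L$ there by $O((\log N)^{2-2\Re s+\epsilon})$. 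The prime powers with $k\ge2$ contribute $O(1)$ when $2\sigma>1$, so only primes $p\le Y$ matter.

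Next we build the resonator. Take $R(d)=\prod_{p\in\mathcal P}(1+\chi_d(p))$, where $\mathcal P$ is the set of primes in $(L,\,2L]$ with
\[
L=c\,\log N\log_2 N,
\]
or more flexibly weights $r(p)$ restricted to $p\le L$. Since $R(d)\ge0$, set $R_2=\sum_d R(d)^2 \Phi(d/N)$ over fundamental discriminants ($d=8m$ or $d$ squarefree $\equiv1 \bmod 4$, smoothed). Expanding, $R(d)^2=\prod_p(2+2\chi_d(p))$ is a sum over squarefree $n\mid P=\prod_{p\in\mathcal P}p$ of $2^{|\mathcal P|}\chi_d(n)$. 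We must keep $P\le N^{1-\delta'}$, i.e.\ $\sum_{p\in\mathcal P}\log p\le(1-\delta')\log N$. This restricts $|\mathcal P|\approx \log N/\log L$, which forces the choice of $L$ above. The standard character sum estimate for quadratic characters over fundamental discriminants, $\sum_d\chi_d(n)\Phi(d/N)=\hat\Phi(0)N\,\mathbbm 1_{n=\square}\prod(\cdot)+O(N^{1/2+\epsilon}n^{1/2})$ (GRH or Pólya–Vinogradov suffices), then gives $R_2=2^{|\mathcal P|}(cN+\text{error})$. The error $N^{1/2+\epsilon}\sum_{n\mid P}\sqrt n\,2^{|\mathcal P|}$ is acceptable because $\sqrt P\le N^{1/2-\delta'/2}$.

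For the first moment, $R_1=\sum_d R(d)^2\sum_{p\le Y}\frac{\log p\,\chi_d(p)}{p^\sigma}\Phi(d/N)$, the diagonal picks up, for each $p\in\mathcal P$, the term $n=p$ paired with $\chi_d(p)$. This yields
\[
R_1\approx 2^{|\mathcal P|}cN\sum_{p\in\mathcal P}\frac{\log p}{p^\sigma}.
\]
Primes $p\notin\mathcal P$ contribute nothing on the diagonal, up to the small local factors at $p\mid d$. Hence
\[
\frac{R_1}{R_2}\gg\sum_{L<p\le2L}\frac{\log p}{p^\sigma}\asymp L^{1-\sigma}\asymp(\log N\log_2N)^{1-\sigma},
\]
and positivity of $R(d)^2$ together with the error bound on $E_d$ gives some $d$ with $-\frac{L'}{L}(\sigma,\chi_d)\ge C(\sigma)(\log N)^{1-\sigma}(\log_2 N)^{1-\sigma}$.

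The main obstacle is the first step: controlling $E_d$ uniformly via GRH, so that truncation at $Y$ is polylogarithmic, while the off-diagonal terms in $R_1$ stay small. Those terms involve $n\,p$ with $n\mid P$ and $p\le Y$, of size up to $PY$, so we need $PY\le N^{1-\delta'}$. This is guaranteed by taking $\delta'$ small and $Y$ polylogarithmic. The truncation error under GRH is $O\bigl(Y^{1/2-\sigma}(\log N)^{\epsilon}\cdots\bigr)$ plus tail terms, which is $o(L^{1-\sigma})$ for $K$ large enough.
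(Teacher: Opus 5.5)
There is a genuine gap, and it sits exactly where the factor $(\log_2 N)^{1-\sigma}$ is supposed to come from. You use the character-sum error $O(N^{1/2+\epsilon}n^{1/2})$, so you impose the length budget $P=\prod_{p\in\mathcal P}p\le N^{1-\delta'}$. Since every $p\in\mathcal P$ exceeds $L$, that budget gives
\[
\sum_{p\in\mathcal P}\frac{\log p}{p^{\sigma}}\le L^{-\sigma}\sum_{p\in\mathcal P}\log p\le L^{-\sigma}\log N\ll (\log N)^{1-\sigma}(\log_2 N)^{-\sigma}
\]
for $L=c\log N\log_2 N$. In fact no set of primes with $\sum_p\log p\le\log N$ gives more than $\asymp_\sigma(\log N)^{1-\sigma}$; the best choice is the primes up to about $\log N$.

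Your final step $R_1/R_2\gg\sum_{L<p\le 2L}\log p/p^{\sigma}\asymp L^{1-\sigma}$ silently replaces your sparse $\mathcal P$ (about $\log N/\log L$ primes) by all primes in $(L,2L]$. For that larger set, however, $\log P\sim L$, so $P=N^{(c+o(1))\log_2 N}$. Your error bound $N^{1/2+\epsilon}2^{|\mathcal P|}\sum_{n\mid P}\sqrt n\ge N^{1/2+\epsilon}2^{|\mathcal P|}\sqrt P$ is then far larger than the main term $\asymp 2^{|\mathcal P|}N$. The same objection applies to your condition $PY\le N^{1-\delta'}$ for the off-diagonal terms of $R_1$. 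In particular P\'olya--Vinogradov does not suffice: any error that grows polynomially in $n$ caps the method at $(\log N)^{1-\sigma}$.

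What is missing is a GRH estimate whose dependence on $n$ is only $n^{o(1)}$. Lemma~\ref{lemma:DM25} is such an estimate, with error $N^{1/2+\varepsilon}\exp((\log n_0)^{1-\varepsilon})g(n_1)$. With it the length of the resonator no longer matters:
\begin{itemize}
\item For $\mathcal P$ the full set of primes in $(L,2L]$ one has $(\log P)^{1-\varepsilon}=o(\log N)$.
\item So the off-diagonal error in $\sum_d R(d)^2$ is $\ll 2^{|\mathcal P|}\cdot 2^{|\mathcal P|}N^{1/2+\varepsilon+o(1)}$.
\item Since $|\mathcal P|\sim c\log N$, the only condition is on the $\ell^1$-mass: $2^{|\mathcal P|}=N^{c\log 2+o(1)}\le N^{1/2-2\varepsilon}$, which holds for $c$ small.
\item The extra factor $\sum_{p\le Y}\log p/p^{\sigma}\ll Y^{1-\sigma}$ in $R_1$ is harmless, since $Y$ is polylogarithmic.
\end{itemize}
This is exactly the mechanism in the paper. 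There the budget is $\sum_n r_\sigma(n)=\prod_{p\le X}(X/p)^{\sigma}=N^{\sigma\eta+o(1)}$ with $X=\eta\log N\log_2 N$, a constraint on mass rather than length.

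Once repaired, your resonator is a legitimate and slightly simpler alternative to the paper's completely multiplicative $r_\sigma(p)=1-(p/X)^{\sigma}$. You could even use the nonnegative weight $\prod_{p\in\mathcal P}(1+\chi_d(p))$ itself instead of its square. That also removes the small inaccuracy that $(1+\chi_d(p))^2=1\ne 2+2\chi_d(p)$ when $p\mid d$. The result is a different but still positive constant, roughly $C(\sigma)\approx\frac{2^{1-\sigma}-1}{1-\sigma}c^{1-\sigma}$.
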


\begin{remark}
    Some notation like $X$ and $\varepsilon$ will vary from section to section.
\end{remark}

\section{Preliminary Lemmas}

Let $q=2N$ , $ T=q^2 $ and  $c=\frac{1}{Y}$, where $Y=\exp((\log qT)^2)$. Under Generalized Riemann Hypothesis, by Perrons formula, for every $
\chi_d$, we have 
\[\sum_{n \leq Y} \frac{\Lambda(n) \chi_d(n)}{n} = \frac{1}{2\pi i} \int_{c-iT}^{c+iT} -\frac{L'(1+s, \chi)}{L(1+s, \chi)} \frac{Y^s}{s} \, ds + O\left(\left(\frac{\log Y}{T}\right)^2 + \frac{\log Y}{Y}\right).\]
Note that, although for every $\chi_d$ \cite{yang2023omegatheoremslogarithmicderivatives} associates $Y_d$ which is related with the  conductor of $\chi_d$, we here take  $Y=\exp( (\log qT)^2)$ which satisfies all $Y_d \le Y $ for all $d$. By the same argument, it is easy to show the following proposition.

\begin{lemma}\label{lemcut}
    For every $\chi_d$, there exists $A>0$ such that
\[
-\frac{L'}{L}(1,\chi_d) = \sum_{n \leq Y} \frac{\Lambda(n)\chi_d(n)}{n} + O(N^{-A}).
\]
\end{lemma}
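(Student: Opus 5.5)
The plan is to start from the truncated Perron formula displayed just before the statement and shift the contour to the left, using GRH to control every term. Write $s=w-1$, so that the integrand is $-\frac{L'}{L}(1+s,\chi_d)\frac{Y^s}{s}$ with $\operatorname{Re}s=c=1/Y$. We move the line of integration to $\operatorname{Re} s=-\frac14$, say, or to any fixed abscissa $-\frac12+\eta$ strictly to the right of the critical line $\operatorname{Re}(1+s)=\frac12$. Under GRH, $L(1+s,\chi_d)$ has no zeros with $\operatorname{Re}(1+s)>\frac12$. The only pole crossed is therefore the simple pole at $s=0$ coming from $1/s$, and its residue is exactly $-\frac{L'}{L}(1,\chi_d)$. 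This gives
\[
\sum_{n\le Y}\frac{\Lambda(n)\chi_d(n)}{n}=-\frac{L'}{L}(1,\chi_d)+\frac{1}{2\pi i}\Big(\int_{-1/4-iT}^{-1/4+iT}+\int_{\text{horiz.}}\Big)-\frac{L'}{L}(1+s,\chi_d)\frac{Y^s}{s}\,ds+O\Big(\Big(\frac{\log Y}{T}\Big)^2+\frac{\log Y}{Y}\Big).
\]

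The second step bounds the log-derivative on the new contour. Under GRH, the standard estimate (from Hadamard factorization and counting zeros in short windows, as in Montgomery--Vaughan) gives, for $\operatorname{Re}(1+s)\ge\frac12+\eta$ with $\eta$ fixed,
\[
\Big|\frac{L'}{L}(1+s,\chi_d)\Big|\ll_\eta \big(\log(|d|(|t|+2))\big)^{2-2\eta}+O(1),
\]
or even the cruder bound $\ll \log^2(qT)$. Either is enough. On the vertical segment $\operatorname{Re}s=-\frac14$ we have $|Y^s|=Y^{-1/4}=\exp(-\frac14(\log qT)^2)$. The vertical integral is then at most $(\log qT)^2\cdot\log T\cdot Y^{-1/4}$, which is far smaller than any power of $N$. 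On the horizontal segments, the factor $\frac{1}{|s|}\le T^{-1}$ combined with $|Y^s|\le Y^{c}=e$ gives a bound of $\ll(\log qT)^2/T\ll N^{-2+\epsilon}$.

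The third step handles the error term from Perron's formula. With $T=q^2$ and $\log Y=(\log qT)^2\asymp(\log N)^2$, we get $(\log Y/T)^2\ll N^{-4}(\log N)^4$, and $\log Y/Y$ is super-polynomially small. Collecting all the pieces yields the lemma with, for example, any $A<2$. The constant is uniform in $d$ because $Y$ and $T$ were chosen uniformly for $N<|d|\le2N$; this is the point of replacing Yang's $Y_d$ by a single $Y$.

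The main obstacle is the uniform GRH bound for $L'/L$ near the horizontal segments. The segments at height $\pm T$ may pass close to zeros. The standard remedy is to perturb $T$ by $O(1)$ so that the segments stay at distance $\gg 1/\log(qT)$ from every ordinate of a zero. On such segments, $\frac{L'}{L}\ll\log^2(qT)$ holds unconditionally. Perturbing $T$ by $O(1)$ changes the Perron error only negligibly.
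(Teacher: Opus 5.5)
Your proof is correct and follows the route the paper has in mind. The paper only states the truncated Perron formula with $T=q^2$, $Y=\exp((\log qT)^2)$ and says the lemma follows ``by the same argument'' as Yang, i.e.\ by shifting the contour left past the pole at $s=0$ and bounding $\frac{L'}{L}$ on the new contour; you carry this out explicitly, uniformly in $d$, with any $A<2$.

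One small remark: the perturbation of $T$ that you call the main obstacle is not needed. Under GRH the horizontal segments lie in $\operatorname{Re}(1+s)\ge \frac34$, at distance at least $\frac14$ from every zero, so $\frac{L'}{L}(1+s,\chi_d)\ll \log(qT)$ holds there directly.
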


Now, we quote a result in \cite[Lemma 1]{0Large} to deal with the sum $\sum_d' \chi_d(n)$.

\begin{lemma}[\cite{0Large} Lemma 1]\label{lemma:DM25}
Assume GRH holds. Let \( n = n_0 n_1^2 \) be a positive integer with \( n_0 \) square-free part of \( n \). Then for any \( \varepsilon > 0 \), we obtain

\[
\sum_{|d| \leq N\atop d\in\F} \chi_d(n) = \frac{N}{\zeta(2)} \prod_{p \mid n} \left( \frac{p}{p+1} \right) \mathbbm{1}_{n=\square} + O\left(N^{1/2+\varepsilon} f(n_0) g(n_1)\right),
\]
where \( f(n_0) = \exp\left((\log n_0)^{1-\varepsilon}\right) \) arises from the square-free component, while \( g(n_1) = \sum_{d \mid n_1} \frac{\mu^2(d)}{d^{1/2+\varepsilon}} \) originates from the square component of the modulus, and \( \mathbbm{1}_{n=\square} \) indicates the indicator function of the square numbers.
\end{lemma}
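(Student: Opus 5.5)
The plan is to reduce to a sum over fundamental discriminants coprime to $n_1$, twisted by a single Kronecker symbol, and then evaluate that sum with a Dirichlet series and contour shifting under GRH.

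\emph{Reduction.} Since $\chi_d$ is multiplicative and $n=n_0n_1^2$, we have $\chi_d(n)=\chi_d(n_0)\chi_d(n_1)^2$. Here $\chi_d(n_1)^2$ is $1$ if $(d,n_1)=1$ and $0$ otherwise. So the sum equals
\[
S:=\sum_{\substack{|d|\le N,\ d\in\F\\ (d,n_1)=1}}\chi_d(n_0).
\]
We split according to the sign of $d$ and the $2$-adic type of $d$. A fundamental discriminant is either $d\equiv 1\pmod 4$ squarefree, or $d=4m$ with $m\equiv 2,3\pmod 4$ squarefree. In each class, quadratic reciprocity makes $d\mapsto \chi_d(n_0)$ a Dirichlet character $\psi$ in the variable $d$ (or $m$), of conductor $\ll n_0$ up to a bounded power of $2$. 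This $\psi$ is principal exactly when $n_0=1$, i.e.\ when $n$ is a square.

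\emph{Dirichlet series.} For each class we form the generating series
\[
F(s)=\sum_{d}\frac{\mu^2(|d|)\,\psi(|d|)\,\mathbbm{1}_{(d,n_1)=1}}{|d|^s}.
\]
Up to a finite Euler factor at $2$, it equals
\[
\frac{L(s,\psi)}{L(2s,\psi^2)}\prod_{p\mid n_1}\Big(1+\psi(p)p^{-s}\Big)^{-1}.
\]
We apply Perron's formula with height $T=N$ and then shift the contour to $\mathrm{Re}\, s=\tfrac12+\varepsilon$.

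\emph{Main term.} If $n_0=1$, then $L(s,\psi)$ is $\zeta(s)$ with finitely many Euler factors removed, and the residue at $s=1$ gives the main term. Summing the local densities over all classes gives total density $6/\pi^2=1/\zeta(2)$ for fundamental discriminants. The coprimality with $p\mid n_1$ contributes the local factor $(1+1/p)^{-1}=p/(p+1)$, with a routine check at $p=2$. Since the primes dividing $n$ are those dividing $n_0n_1$, this matches $\prod_{p\mid n}\frac{p}{p+1}\mathbbm{1}_{n=\square}$. If $n_0>1$, then $\psi$ is nonprincipal, $F$ is entire in $\mathrm{Re}\, s>\tfrac12$, and there is no main term.

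\emph{Error terms.} On the line $\mathrm{Re}\, s=\tfrac12+\varepsilon$ we bound each factor separately:
\begin{itemize}
\item $1/L(2s,\psi^2)$ is $O(1)$ there, since $\mathrm{Re}\,2s\ge 1+2\varepsilon$.
\item The finite product is bounded by $\prod_{p\mid n_1}(1+p^{-1/2-\varepsilon})\le\sum_{d\mid n_1}\mu^2(d)d^{-1/2-\varepsilon}=g(n_1)$.
\item For $L(s,\psi)$ we use the GRH bound $|L(\tfrac12+\varepsilon+it,\psi)|\ll \exp\big((\log (n_0(|t|+2)))^{1-\varepsilon}\big)$, which follows from the standard Littlewood-type conditional estimate $\log|L|\ll \frac{\log \mathfrak q}{\log\log \mathfrak q}$ near the critical line.
\end{itemize}
The $t$-dependence of the last bound is absorbed into $N^{\varepsilon}$, because $|t|\le T=N$. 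Together with the horizontal segments and the Perron truncation error (both $\ll N^{1/2+\varepsilon}$ for $T=N$), the total error is $O(N^{1/2+\varepsilon}f(n_0)g(n_1))$.

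The main obstacle is obtaining the error with the uniform dependence $f(n_0)=\exp((\log n_0)^{1-\varepsilon})$ rather than a crude $n_0^{\varepsilon}$. This requires the sharp conditional bound for $L(s,\psi)$ just right of the critical line, uniformly in the conductor and in $t$. One also has to carry the $2$-adic bookkeeping across the fundamental-discriminant classes so that the main-term constant comes out exactly as $1/\zeta(2)$.
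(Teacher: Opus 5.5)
The paper does not prove this lemma; it quotes it from \cite{0Large} (Lemma 1 there), so there is no internal argument to compare with, and your proposal has to stand on its own. Its structure is the standard one and is sound. You write $\chi_d(n)=\chi_d(n_0)$ times the indicator of $(d,n_1)=1$, form the series $L(s,\psi)L(2s,\psi^2)^{-1}\prod_{p\mid n_1}(1+\psi(p)p^{-s})^{-1}$ for each class, apply Perron with $T=N$, and shift to $\mathrm{Re}\,s=\frac12+\varepsilon$. Your main term is also correct. The classes $d\equiv 1\pmod 4$, $d=4m$ with $m\equiv 3\pmod 4$, and $d=8m'$ carry proportions $\frac23,\frac16,\frac16$ of $N/\zeta(2)$. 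Coprimality to each $p\mid n_1$, including $p=2$, costs exactly $\frac{p}{p+1}$. However, two steps of your error analysis are false as written.

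First, $|1+\psi(p)p^{-s}|^{-1}\le 1+p^{-1/2-\varepsilon}$ fails whenever $\psi(p)p^{-it}$ is close to $-1$. The correct bound is $(1-p^{-1/2-\varepsilon})^{-1}$. This costs only a constant, because
\[
\prod_{p\mid n_1}\bigl(1-p^{-1/2-\varepsilon}\bigr)^{-1}=g(n_1)\prod_{p\mid n_1}\bigl(1-p^{-1-2\varepsilon}\bigr)^{-1}\le \zeta(1+2\varepsilon)\,g(n_1).
\]

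Second, the bound you cite for $L(s,\psi)$ does not deliver $f(n_0)$, and this is exactly the step you single out as the crux. The Littlewood-type estimate $\log|L|\ll \log\mathfrak q/\log\log\mathfrak q$ only gives $|L|\ll\exp(c\log\mathfrak q/\log\log\mathfrak q)$, which is not $O(f(n_0))$ as $n_0\to\infty$. You need the GRH estimate off the critical line:
\[
\log|L(\sigma+it,\psi)|\ll \frac{(\log\mathfrak q)^{2-2\sigma}}{(1-\sigma)\log\log\mathfrak q}\quad\text{for }\frac12+\frac{1}{\log\log\mathfrak q}\le\sigma\le 1-\frac{1}{\log\log\mathfrak q},
\]
with $\log|L|\ll\log\log\log\mathfrak q$ beyond that range. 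Together these give $\log|L(\sigma+it,\psi)|\ll_\varepsilon(\log\mathfrak q)^{1-2\varepsilon}$ uniformly for $\sigma\ge\frac12+\varepsilon$. It is the exponent $1-2\varepsilon<1-\varepsilon$ that produces $f(n_0)$. Subadditivity,
\[
\bigl(\log n_0+\log(|t|+2)\bigr)^{1-2\varepsilon}\le(\log n_0)^{1-2\varepsilon}+\bigl(\log(|t|+2)\bigr)^{1-2\varepsilon},
\]
then splits $f(n_0)$ from a factor $N^{o(1)}$ for $|t|\le N$. This works on the vertical line and on the horizontal segments alike.

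A minor point: the congruence conditions defining the classes are not a finite Euler factor at $2$. Impose them by orthogonality of characters modulo $4$, which replaces $\psi$ by $\psi\chi$ with $\chi$ modulo $4$. These twists are still nonprincipal when $n_0>1$ and have conductor $\ll n_0$, so nothing else changes. With these repairs your argument proves the lemma.
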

It is clear for $g(n_0)$ that 
$$f(n_0)\le  \exp\big((\log n)^{1-\varepsilon}\big).$$
For $g(n_1)$ we have
$$g(n_1)=\prod_{p|n_1}\Big(1-\frac1{p^{1/2+\varepsilon}}\Big)\le\exp\big((\log n_1)^{\frac12-\varepsilon}\big)\le\exp\big((\log n)^{\frac12-\varepsilon}\big).$$
Moreover, let $P_+(n)$ denote the largest prime divisor of $n$, then we also have
$$f(n_0)\le  \exp\big((P_+(n))^{1-\varepsilon}\big),\;\;\;\;\;\;\;g(n_1)\le\exp\big((P_+(n))^{\frac12-\varepsilon}\big).$$

\section{Proof of  Theorem \ref{thm1}}\label{section:proofofthm1}

We follow the argument  in  \cite{yang2023omegatheoremslogarithmicderivatives}.

Now let $0<\varepsilon<1/2$ be a fixed number. Let  $X=B\log N\log \log N\sim B\log q \log \log q$ with $B=\frac{1}{4}-\delta>0$ and define $r(n)$ as completely multiplicative function whose values of  primes are defined as follows:
\[
r(p) =
\begin{cases} 
1 - \dfrac{p}{X}, & \text{if } p \leq X, \\[8pt]
0, & \text{if } p > X.
\end{cases}
\]

Then for every $\chi_d$, define the resonator $R_d$ as follows:
\[
R_d:=\sum_{n\in\mathbb{N}}r(n)\chi_d(n)=\prod_{p\le X}\big(1-r(p)\chi(p)\big)=\sum_{m\in S(X)}r(m)\chi_d(m) .
\]

The sum $S_2$ and $S_1$ are defined as follows:
\[
S_2:=\sum_{\substack{d\in\F \\ N < |d| \leq 2N}}\Big(\sum_{n\le Y}\frac{\Lambda(n)\chi_d(n)}{n}\Big)R_d^2,
\]
\[
S_1:=\sum_{\substack{d\in\F \\ N < |d| \leq 2N}}R_d^2.
\]

Note that $R_d$ is a real number since $\chi_d$ is a quadratic character whose value  is $\pm 1$ or $0$. Hence $S_1$ is  nonnegative. Moreover $S_1$ is non-zero when $N$ is  large enough.

Hence, by trivial estimation, we have
\[
\max_{\substack{d\in\F \\ N < |d| \leq 2N}} \sum_{n\le Y}\frac{\Lambda(n)\chi_d(n)}{n}\ge \frac{S_2}{S_1}.
\]

Write
\[
A(N):=\sum_{p\le X}\frac{\log p}{p} r(p)\frac{p}{p+1}=\sum_{p\le X}\frac{\log p}{p+1} r(p).
\]
Firstly we show that
    \[
    \frac{S_2}{S_1}\ge A(N) + o(1).
    \]
For $S_1$, by  Lemma \ref{lemma:DM25} we have
\begin{align*}
    S_1&=\frac{N}{\zeta(2)}\sum_{m,n\in S(X)\atop mn=\square}r(m)r(n)\prod_{p|mn}\frac{p}{p+1}+O\Big(N^{1/2+\varepsilon}\exp(X^{1-\varepsilon})\sum_{m,n\in S(X)}r(m)r(n)\Big)\\
    &=\frac{N}{\zeta(2)}\sum_{m,n\in S(X)\atop mn=\square}r(m)r(n)\prod_{p|mn}\frac{p}{p+1}+O\Big(N^{1/2+\varepsilon}\big(\sum_{m\in S(X)}r(m)\big)^2\Big)\\
    &=\frac{N}{\zeta(2)}\sum_{m,n\in S(X)\atop mn=\square}r(m)r(n)\prod_{p|mn}\frac{p}{p+1}+O\Big(N^{1-2\delta+\varepsilon}\Big),
\end{align*}
since
\begin{align*}\sum_{m\in S(y)}r(m)&=\prod_{p\le X}(1-r(p))^{-1}=\prod_{p\le y}\frac{X}{p}\\&=\exp\big(\pi(X)\log X-\theta(X)\big)\\&=\exp\Big((1+o(1))\frac X{\log X}\Big)\\&\ll N^{1/4-\delta}.\end{align*}
For $S_2$, by  Lemma \ref{lemma:DM25}, we have
\begin{align*}
S_2 &= \sum_{d\in\F\atop N<|d|\le2N}  \bigg(\sum_{k\le Y} \frac{\Lambda(k)\chi_d(k)}{k}\bigg) \sum_{m,n\in S(X)}r(m)r(n)\chi_d(m)\chi_d(n) \notag \\
&= \frac N{\zeta(2)}\sum_{k\le Y} \frac{\Lambda(k)}{k}\sum_{m,n\in S(X)\atop kmn=\square}r(m)r(n)\prod_{p|kmn}\frac{p}{p+1}\\&+O\Big(N^{1/2+\varepsilon}\sum_{k\le Y} \frac{\Lambda(k)}{k}\exp\big((\log Y)^{1-\varepsilon}\big)\exp\big(X^{1-\varepsilon}\big)\sum_{m,n\in S(X)}r(m)r(n)\Big)\\
&=\frac N{\zeta(2)}\sum_{k\le Y} \frac{\Lambda(k)}{k}\sum_{m,n\in S(X)\atop kmn=\square}r(m)r(n)\prod_{p|kmn}\frac{p}{p+1}\\&+O\Big(N^{1/2+\varepsilon}\log Y\exp\big((\log Y)^{1-\varepsilon}\big)\exp\big(X^{1-\varepsilon}\big)\big(\sum_{m\in S(X)}r(m)\big)^2\Big)\\
&=\frac N{\zeta(2)}\sum_{k\le Y} \frac{\Lambda(k)}{k}\sum_{m,n\in S(X)\atop kmn=\square}r(m)r(n)\prod_{p|kmn}\frac{p}{p+1}+O\Big(N^{1-\delta+\varepsilon}\Big).
\end{align*}
By the choice of $X$ and $Y$, the above is
\begin{align*}&\ge\frac N{\zeta(2)}\sum_{k\le X} \frac{\Lambda(k)}{k}\sum_{m,n\in S(X),k|n\atop kmn=\square}r(m)r(n)\prod_{p|kmn}\frac{p}{p+1}+O\Big(N^{1-\delta+\varepsilon}\Big)\\
&=\frac N{\zeta(2)}\sum_{k\le X} \frac{\Lambda(k)}{k}\sum_{m,\ell\in S(X)\atop k^2m\ell=\square}r(m)r(k\ell)\prod_{p|k^2m\ell}\frac{p}{p+1}+O\Big(N^{1-\delta+\varepsilon}\Big)\\
&\ge\frac N{\zeta(2)}\sum_{k\le X} \frac{\Lambda(k)}{k}r(k)\prod_{p|k}\frac{p}{p+1}\sum_{m,\ell\in S(X)\atop m\ell=\square}r(m)r(k\ell)\prod_{p|m\ell}\frac{p}{p+1}+O\Big(N^{1-\delta+\varepsilon}\Big)\\
&=\sum_{k\le X} \frac{\Lambda(k)}{k}r(k)\prod_{p|k}\frac{p}{p+1}S_1+O\Big(N^{1-\delta+\varepsilon}\Big).
\end{align*}
So we have 
\begin{align*}
    \frac{S_2}{S_1}&\ge\sum_{k\le X} \frac{\Lambda(k)}{k}r(k)\prod_{p|k}\frac{p}{p+1}+O\Big(N^{-\delta+\varepsilon}\Big)\\
    &\ge\sum_{p^v\le X\atop v\ge1} \frac{\log p}{p^v}r(p^v)\frac{p}{p+1}+O\Big(N^{-\delta+\varepsilon}\Big)\\
    &\ge\sum_{p\le X} \frac{\log p}{p}r(p)\frac{p}{p+1}+O\Big(N^{-1/6+\varepsilon}\Big)\\
    &=\sum_{p\le X} \frac{\log p}{p+1}r(p)+O\Big(N^{-\delta+\varepsilon}\Big).
\end{align*}
This proves 
    \[
    \frac{S_2}{S_1}\ge A(N) + o(1).
    \]
Now we compute $A(N)$.
By definition of $A(N)$ and $r(p)$ we have
\begin{align*}
A(N)&=\sum_{p\le X}\frac{\log p}{p+1} \big(1-\frac{p}{X}\big)\\&=\sum_{p\le X}\frac{\log p}{p+1} \big(1-\frac{p}{X}\big)\\&=\sum_{p\le X}\log p\Big(\frac1p-\frac{1}{p(p+1)}\Big)\big(1-\frac{p}{X}\big)\\
&=\sum_{p\le X}\frac{\log p}{p}-\frac1X\sum_{p\le X}\log p+\sum_{p\le X}\frac{\log p}{p(p+1)}-\frac1X\sum_{p\le X}\frac{\log p}{p+1}\\
&=\sum_{p\le X}\frac{\log p}{p}-\frac1X\sum_{p\le X}\log p+\sum_{p\ge2}\frac{\log p}{p(p+1)}+O\Big(\frac{\log X}X\Big).
\end{align*}
For the first sum, Eq. (2.31) in Page 68 \cite{RS62} gives
$$\sum_{p\le X}\frac{\log p}{p}=\log X-\gamma-\sum_{v\ge2}\sum_{p\ge2}\frac{\log p}{p^v}+O\big(e^{-c\sqrt{\log X}}\big).$$
For the second some, we use 
$$\theta(X)=\sum_{p\le X}\log p=X+O(Xe^{-c\sqrt{\log X}}).$$
So we have
\begin{align*}
    A(N)&=\log X-\gamma-\sum_{v\ge2}\sum_{p\ge2}\frac{\log p}{p^v}-1+\sum_{p\ge2}\frac{\log p}{p(p+1)}+O\Big(\frac{\log X}X\Big)\\
    &=\log X-\gamma-1-\sum_{p\ge2}\frac{\log p}{p^2-1}+O\Big(\frac{\log X}X\Big).
\end{align*}
At last we use Lemma \ref{lemcut} and obtain
\[
\max_{\substack{d\in\F \\ N < |d| \leq 2N}} 
-\frac{L'}{ L}\left(1, \chi_d\right) \geq 
A(N)+ o(1)\]
So we finish the proof of Theorem \ref{thm1}.

\section{Proof of Theorem \ref{theorem:measurelargeofderoflogL}}
By slight modification, we can give a proof of Theorem \ref{theorem:measurelargeofderoflogL}. As in \cite{yang2023omegatheoremslogarithmicderivatives},  we set $B'=Be^{-x}>0$. Then Theorem \ref{thm1} also holds for such $B'$.

define
\[
J(N,x)=\log \log N+ \log \log \log N +C-x +N^{-A}+\frac{1}{2}\exp(-(\log \log N)^{1/3});
\]
\[
\tilde{J}(N,x)=J(N,x)-\frac{1}{2}\exp(-(\log \log N)^{1/3}).
\]

Similar to our treatment in section \ref{section:proofofthm1},   when $N$ is large enough, we have

\[
\frac{S_2}{S_1}\ge J(N,x).
\]
Note that, here we use the definition of $B'$.

Define the following subset of the set $\mathbf{S}$ consisting of quadratic characters $\chi_d$ with $N<d \le 2N$:
\[
V(N,x)=\{\chi_d\in \mathbf{S}: \sum_{n\le Y}\frac{\Lambda(n)\chi_d(n)}{n} \le \tilde{J}_x\},
\]

\[
W(N,x)=\{\chi_d\in \mathbf{S}: \sum_{n\le Y}\frac{\Lambda(n)\chi_d(n)}{n} > \tilde{J}_x\},
\]

\[
Z(N,x)=\{\chi_d\in \mathbf{S}: -\frac{L'}{L}(1,\chi_d)> \tilde{J}_x-N^{-A}\}.
\]
where $\tilde{J}_x$ is defined.

By definition, it is easy to see that  $W(N,x)$ is a subset of $Z(N,x)$ and we only need to give a lower bound of the size of $W(N,x)$. Since $V(N,x)$ and $W(N,x)$ is a paritition of $\mathbf{S}$, we have the following :
\[
S_1\cdot J_x \le  S_2=\sum_{\chi_d \in V(N,x)} + \sum_{\chi_d \in W(N,x)}\le  \tilde{J}_x \cdot S_1 + \sum_{\chi_d \in W(N,x)}(\sum_{n\le Y}\frac{\Lambda(n)\chi_d(n)}{n})R_d^2,
\]
hence we have 
\[
\frac{1}{2}\exp(-(\log \log N)^{1/3}) \cdot S_1 \le \sum_{\chi_d \in W(N,x)}(\sum_{n\le Y}\frac{\Lambda(n)\chi_d(n)}{n})R_d^2.
\]
We now estimate the right hand side of this inequality.

First, under GRH, by \cite[Theorem 11.4]{MV07}, we have 
\[
|\frac{L'}{L}(1,\chi_d)|\le A \log N.
\]
In fact, $|\frac{L'}{L}(1,\chi_d)|\le A \log (\text{conductor of } \chi_d)$ and the conductor of $\chi_d\le 2N$, hence we have the above result. As for $R_d^2$, by the same argument of \cite[Section 2]{yang2023omegatheoremslogarithmicderivatives},We have $R_d^2 \le N^{2B'+o(1)}$. 

Therefore,  we have that
\[
\sum_{\chi_d \in W(N,x)}(\sum_{n\le Y}\frac{\Lambda(n)\chi_d(n)}{n})R_d^2 \le A \log (N) N^{2B'+o(1)}\# W(N,x),
\]
hence 
\[
\#W(N,x)\ge \frac{\frac{1}{2}\exp(-(\log \log N)^{1/3}) \cdot S_1}{A \log N \cdot N^{2B+o(1)}}\ge N^{1-2B'+o(1)}  = N^{1-C'(\varepsilon)e^{-x}+o(1)}.
\]

    Since $W(N,x)$ is a subset of $Z(N,x)$, by the inequality above, we proved this theorem.

\section{Proof of Theorem of \ref{thm3}}\label{Section:proofnear1}

The above theorems focus on special value of quadratic  Dircichlet $L$-function at $s=1$. We now follow the idea of \cite{li2024omegatheoremslogarithmicderivatives}, which originates from \cite{yang2023omegatheoremslogarithmicderivatives} ,  to show the similar  Omega results of Dirichlet $L$-function near $1$.

We first state a lemma which generalizes \cite[Lemma2.2]{li2024omegatheoremslogarithmicderivatives}.

\begin{lemma}\label{lemma:LZgeneral}
Let \( A \) be any positive real number, $\chi$ is a primitive Dirichlet character with conductor   \( q \geq 3 \) , \( \sigma_A := 1 - A / \log \log q \).  
Let \( Y \geq 3 \), \(-3q \leq t \leq 3q\) and \(\frac{1}{2} \leq \sigma_0 < 1\). Suppose that the rectangle \(\{s : \sigma_0 < \operatorname{Re}(s) \leq 1, |\operatorname{Im}(s) - t| \leq Y + 2\}\) is free of zeros of \( L(s, \chi) \). Then for \( \sigma_A (\leq 3) \) and any \( \xi \in [t - Y, t + Y] \), we have  

\[\left| \frac{L'}{L} (\sigma_A + i\xi, \chi) \right| \ll \frac{\log q}{\sigma_A - \sigma_0}.\]

Further, for \( \sigma_1 \in (\sigma_0, \sigma_A) \), we have  

\[-\frac{L'}{L} (\sigma_A + it, \chi) = \sum_{n \leq Y} \frac{\Lambda(n)\chi(n)}{n^{\sigma_A + it}} + O\left(\frac{\log q}{\sigma_1 - \sigma_0} Y^{\sigma_1 - \sigma_A} \log \frac{Y}{\sigma_A - \sigma_1}\right).\]
\end{lemma}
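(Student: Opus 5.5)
We follow the standard argument of Granville--Soundararajan type (as in \cite[Lemma 2.2]{li2024omegatheoremslogarithmicderivatives}). The proof has two stages: a pointwise bound for $L'/L$ inside the zero-free rectangle, and then a truncated Perron formula whose contour is shifted into that rectangle.

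\emph{Pointwise bound.} We start from the partial-fraction expansion, valid uniformly for $s$ in the strip and $|\operatorname{Im} s|\le 4q$:
\[
\frac{L'}{L}(s,\chi)=\sum_{|\gamma-\operatorname{Im}s|\le 1}\frac{1}{s-\rho}+O(\log q).
\]
The number of zeros $\rho=\beta+i\gamma$ with $|\gamma-\operatorname{Im}s|\le1$ is $O(\log q)$. Take $s=\sigma_A+i\xi$ with $\xi\in[t-Y,t+Y]$. Every such zero has $|\gamma-t|\le Y+1$, so by hypothesis either $\beta\le\sigma_0$ or $\beta>1$. The latter is impossible, since there are no zeros with real part greater than $1$. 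Hence $|s-\rho|\ge\sigma_A-\sigma_0$ for each of these zeros. Summing over the $O(\log q)$ zeros gives the first claim, $\ll \log q/(\sigma_A-\sigma_0)$. The same argument gives the same bound with $\sigma_A$ replaced by any $\sigma\in(\sigma_0,3]$, namely $\ll\log q/(\sigma-\sigma_0)$, for $|\operatorname{Im}s-t|\le Y+1$. This is used in the next stage on the line $\operatorname{Re}=\sigma_1$.

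\emph{Perron formula.} Put $c=1-\sigma_A+1/\log Y$, so that $\sigma_A+c>1$. We apply truncated Perron:
\[
\sum_{n\le Y}\frac{\Lambda(n)\chi(n)}{n^{\sigma_A+it}}=\frac{1}{2\pi i}\int_{c-iY}^{c+iY}-\frac{L'}{L}(\sigma_A+it+w,\chi)\frac{Y^w}{w}\,dw+E.
\]
The Perron error $E$ is bounded by $\sum_n \Lambda(n)n^{-\sigma_A-c}\min(1,Y^{-1}|\log(Y/n)|^{-1})Y^{c}$, which after standard estimates is $O(Y^{1-\sigma_A}\log^2 Y/Y)$ and is absorbed into the stated error.

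Next we shift the contour to $\operatorname{Re}w=\sigma_1-\sigma_A<0$. This crosses only the simple pole at $w=0$, whose residue is $-\frac{L'}{L}(\sigma_A+it,\chi)$. The shifted region has $|\operatorname{Im}(w)|\le Y$, so it stays inside the zero-free rectangle. The remaining pieces are bounded using the pointwise estimate, with $\operatorname{Re}(\sigma_A+it+w)$ ranging over $[\sigma_1,\sigma_A+c]$:
\begin{itemize}
\item On the vertical segment $\operatorname{Re}w=\sigma_1-\sigma_A$, we have $|L'/L|\ll\log q/(\sigma_1-\sigma_0)$. Also $\int |Y^w/w|\,|dw|\ll Y^{\sigma_1-\sigma_A}\int_{-Y}^{Y}\frac{dv}{\sqrt{(\sigma_A-\sigma_1)^2+v^2}}\ll Y^{\sigma_1-\sigma_A}\log\frac{Y}{\sigma_A-\sigma_1}$. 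Together these give exactly the main error term.
\item On the horizontal segments $\operatorname{Im}w=\pm Y$, we have $|L'/L|\ll \log q/(\sigma_1-\sigma_0)$ and $|1/w|\le1/Y$, while $\int Y^{\operatorname{Re}w}\,d(\operatorname{Re}w)\ll Y^{c}$. So these contribute $\ll \frac{\log q}{\sigma_1-\sigma_0}Y^{1-\sigma_A}/Y$, which is dominated by the main error.
\end{itemize}

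The main obstacle is this bookkeeping. On the horizontal segments the real part passes through $1$, where the pointwise bound still holds since $\sigma>\sigma_0$. The Perron remainder must be checked against the claimed error term.
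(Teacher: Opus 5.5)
Your proposal is correct and follows the same route as the paper. The paper gets the pointwise bound from the partial-fraction expansion in Koukoulopoulos's Lemma 11.4 for primitive $\chi$, and then defers to the truncated Perron formula with the contour shifted to $\operatorname{Re} w=\sigma_1-\sigma_A$ from Li's Lemma 2.2, which is exactly your two-stage argument. As in the paper, your $O(\log q)$ in the partial-fraction step tacitly requires $|\xi|\le 3q+Y\ll q^{O(1)}$, i.e.\ that $Y$ is not too large; this holds in the application, where $Y=(\log q)^{20/\varepsilon'}$.
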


This lemma is almost the same with \cite[Lemma 2.2]{li2024omegatheoremslogarithmicderivatives} which originates from \cite[Lemma 2]{yang2023omegatheoremslogarithmicderivatives}. In their paper,  $\chi$ is non-principal  character of conductor of prime number $q$. Hence $\chi$ is primitive. Therefore,  lemma \ref{lemma:LZgeneral} generalizes these two lemmas.

\begin{proof}
    As \cite{yang2023omegatheoremslogarithmicderivatives} and \cite{li2024omegatheoremslogarithmicderivatives} quote, we also need \cite[Lemma 11.4]{Koukoulopoulos2019TheDO}. In fact, Lemma 11.4 in \cite{Koukoulopoulos2019TheDO} assumes that $\chi$ is primitive with conductor $q\ge 3$ which is our assumption. Then by this lemma, we have
    \[
\left| \frac{L'}{L} (\sigma + i\xi, \chi) \right| 
\leq \sum_{\substack{|\gamma - \xi| \leq 1, \\ L(\beta + i\gamma, \chi) = 0, \\ 0 < \beta < 1}} \frac{1}{\sigma - \beta} + O(\log q) 
\ll \frac{\log q}{\sigma - \sigma_0}.
\]
    The following steps are the same with \cite[Lemma 2.2]{li2024omegatheoremslogarithmicderivatives}. 
\end{proof}

Now since $\chi_d$ is primitive, we can apply Lemma \ref{lemma:LZgeneral} to $\chi_d$ for $d \in \F$, hence we have.

\[
-\frac{L'}{L} (\sigma_A, \chi_d) = \sum_{n \leq Y} \frac{\Lambda(n)\chi_d(n)}{n^{\sigma_A }} + O\left(\frac{\log d}{\sigma_1 - \sigma_0} Y^{\sigma_1 - \sigma_A} \log \frac{Y}{\sigma_A - \sigma_1}\right).
\]

Moreover, let $\varepsilon'\in (0,\sigma_A-\frac{1}{2})$,  $\sigma_0=\sigma_A-\frac{1}{2}\varepsilon'$ $\sigma_1=\sigma_0+\frac{1}{\log Y}$ and $Y=(\log q)^{20/\varepsilon'}$, then under GRH the above formula becomes
\[
-\frac{L'}{L} (\sigma_A, \chi_d) = \sum_{n \leq Y} \frac{\Lambda(n)\chi_d(n)}{n^{\sigma_A }} + O\left( (\log q)^{-18}\right), \quad \forall d\in \F,\;N<|d|\le2N.
\]

Note that in \cite{yang2023omegatheoremslogarithmicderivatives} and \cite{li2024omegatheoremslogarithmicderivatives}, they consider the "bad" characters due to the zero-density results. In our case, we will use the Lemma \ref{lemma:DM25} in \cite{0Large} which requires GRH and hence we will drop those considerations. Moreover, since $d\in \F$, we can also rewrite the error term of the above formula as  $O((\log N)^{-18})$.

Let $X=\kappa \log N \log_2 N$. Now we define another different resonator same as \cite{li2024omegatheoremslogarithmicderivatives}. Let \( r_A(n) \) be a completely multiplicative function with values at primes as

\[
r_A(p) =
\begin{cases} 
1 - X^{\sigma_A - 1}, & \text{if } p \leq X, \\
0, & \text{if } p > X.
\end{cases}
\]

Similarly define 
\[
R_{d,A} =\sum_{n \in \mathbb{N} }r_A(n)\chi_d(n).
\]

By the prime number theorem and the standard argument, we obtain that
\[
|R_{d,A}|^2 \leq N^{2A \kappa + o(1)}.
\]

Now we similarly define the summation as the above section:
\[
S_{2,A}:=\sum_{\substack{d\in \F  \\ N < |d| \leq 2N}}\left(\sum_{n\le Y}\frac{\Lambda(n)\chi_d(n)}{n^{\sigma_A}}\right)R_{d,A}^2,
\]
\[
S_{1,A}:=\sum_{\substack{d\in \F \\ N < |d| \leq 2N}}R_{d,A}^2.
\]

Write that 
\[
\A(N):=\sum_{p\le X}\frac{\log p}{p^{\sigma_A}} r_A(p)\frac{p}{p+1}.
\]

Consider the computation in section \ref{section:proofofthm1},  the only property of the resonator function $r$ that we need is that $r$ is completely multiplicative. Since $r_A$ is also  completely multiplicative, then we also have 

\[
    \frac{S_2}{S_1}\ge\sum_{p\le X} \frac{\log p}{p^{\sigma_A}} r_A(p)\frac{p}{p+1}+E,
\]
where $E$ is the error term. Apart from the completely multiplicative property, we need estimation of $\sum_{m \le X} r_A(m)$ to get information of $E$ by computation of section \ref{section:proofofthm1}. Since
\[
\sum_{m \le X} r_A(m)=\prod_{p \le X }(1-r_A(p))^{-1}=X^{\pi(X) \frac{A}{\log_2 N}}=\exp \left(\kappa A\log N+\frac{\kappa A \log N}{\log_2 N}+O(\frac{\log N}{(\log_2 N)^2} \right),
\]
by choosing suitable $\kappa$, use the same computation in Section \ref{section:proofofthm1}, we have $E=o(1)$.

Now we compute $\A(N)$. By  the definitions of $\A(N)$ and $r_A(p)$, we have

\begin{align*}
\A(N)&=\sum_{p\le X} \frac{\log p}{p^{\sigma_A}} r_A(p)\frac{p}{p+1}\\
&=\sum_{p\le X} \frac{\log p}{p^{\sigma_A}} r_A(p)(1-\frac{1}{p+1})\\
&=\sum_{p\le X} \frac{\log p}{p^{\sigma_A}} r_A(p)- \sum_{p\le X} \frac{\log p}{p^{\sigma_A}(p+1)} r_A(p)
\end{align*}

Now the first summation of the right hand side is obtained in \cite{li2024omegatheoremslogarithmicderivatives}:
\[
\sum_{p \leq X} \frac{\log p}{p^{\sigma_A}} r(p) = \frac{e^A - 1}{A} \log_2 N + O\left(\frac{\log_2  N}{\log_3 N}\right).
\]
The second summation is bounded since $r_A(p)$ is a constant and $\frac{\log p}{p^{\sigma_A}(p+1)}\sim \frac{\log p}{p^{\sigma_A+1}}$ whose summation is bounded.

Combined with these two results, we finish the proof.

\section{Proof of Theorem \ref{thm4}}

We now follow the idea of \cite{yang2023omegatheoremslogarithmicderivatives} to show similar results of Section \ref{Section:proofnear1}. Now fix $\sigma\in(1/2,1)$.

First, under GRH we have 

\[
-\frac{L'}{L} (\sigma, \chi_d) = \sum_{n \leq Y} \frac{\Lambda(n)\chi_d(n)}{n^{\sigma }} + O\left( (\log N)^{-18}\right), \quad \forall d\in \F,\;N<|d|\le2N.
\]

The reason why these formulae hold is the same as that in Section \ref{section:proofofthm1}. We can also generalize Lemma 2 in \cite{yang2023omegatheoremslogarithmicderivatives} to primitive characters since Lemma 11.4 in \cite{Koukoulopoulos2019TheDO} assumes primitivity. Then apply the lemma to quadratic characters

Let $X=\eta \log N \log_2 N$. Now we define another different resonator same as \cite{yang2023omegatheoremslogarithmicderivatives}. Let \( r_\sigma(n) \) be a completely multiplicative function with values at primes as

\[
r_\sigma(p) =
\begin{cases} 
1 - (\frac{p}{X})^\sigma, & \text{if } p \leq X, \\
0, & \text{if } p > X.
\end{cases}
\]

Similarly define 
\[
R_{d,\sigma} =\sum_{n \in \mathbb{N} }r_\sigma(n)\chi_d(n).
\]

Write 
\[
A_2(N):=\sum_{p\le X}\frac{\log p}{p^{\sigma}} r_\sigma (p)\frac{p}{p+1}.
\]

By the prime number theorem and the standard argument, we obtain that
\[
|R_{d,\sigma}|^2 \leq N^{2\eta \sigma + o(1)}.
\]

Now we similarly define the summation as the above section:
\[
S_{2,\sigma}:=\sum_{\substack{d\in \F  \\ N < |d| \leq 2N}}\left(\sum_{n\le Y}\frac{\Lambda(n)\chi_d(n)}{n^{\sigma}}\right)R_{d,\sigma}^2,
\]
\[
S_{1,\sigma}:=\sum_{\substack{d\in \F \\ N < |d| \leq 2N}}R_{d,\sigma}^2.
\] by GRH we have this formula.

Also, since $r_\sigma(n)$ is completely multiplicative, 
\[
    \frac{S_2}{S_1}\ge\sum_{p\le X} \frac{\log p}{p^{\sigma}} r_\sigma(p)\frac{p}{p+1}+E,
\]
where $E$ is the error term. Apart from the completely multiplicative property, we need estimation of $\sum_{n \le X} r_\sigma(n)$ to get information of $E$ by computation of section \ref{section:proofofthm1}. Since
\[
\sum_{n \le X} r_\sigma(n)=\prod_{p \le X }(1-r_\sigma(p))^{-1}=\prod_{n\le X}(\frac{X}{p})^{\sigma},
\]
by choosing suitable $\eta$, use the same computation in Section \ref{section:proofofthm1}, we have $E=o(1)$.

Now we compute $A_2(N)$. By the definitions of $A_2(N)$ and $r_\sigma(p)$, we have
\begin{align*}
A_2 (N)&=\sum_{p\le X} \frac{\log p}{p^{\sigma}} r_\sigma(p)\frac{p}{p+1}\\
&=\sum_{p\le X} \frac{\log p}{p^{\sigma}} r_\sigma(p)(1-\frac{1}{p+1})\\
&=\sum_{p\le X} \frac{\log p}{p^{\sigma}} r_\sigma(p)- \sum_{p\le X} \frac{\log p}{p^{\sigma}(p+1)} r_\sigma(p)
\end{align*}

The second  summation is convergent. Hence, by \cite[Equation (30)]{yang2023omegatheoremslogarithmicderivatives}
\[
A_2(N) = \Bigl( \frac{\sigma}{1-\sigma} + o(1) \Bigr) X^{1-\sigma}
= \Bigl( \frac{\sigma}{1-\sigma} + o(1) \Bigr) \eta^{1-\sigma} (\log N)^{1-\sigma} (\log_2 N)^{1-\sigma}.
\]

Combined with all these, we finish our proof.

\section*{Acknowledgement}
Z. Dong is supported by the National
	Natural Science Foundation of China (Grant No. 	1240011770). 
\bibliographystyle{alpha}
\bibliography{ref}

\end{document}